\documentclass[11pt,a4paper]{article}
\usepackage[T1]{fontenc}
\usepackage[utf8]{inputenc}
\usepackage{lmodern}
\usepackage[margin=24mm]{geometry}
\usepackage{amsmath,amssymb,amsthm}
\usepackage{microtype}
\usepackage{xcolor}

\usepackage[hidelinks]{hyperref}
\usepackage{xurl}
\newtheorem{theorem}{Theorem}
\newtheorem{lemma}[theorem]{Lemma}
\newcommand{\Pal}{\operatorname{Pal}}
\newcommand{\Z}{\mathbb{Z}}
\newcommand{\psemi}{\bar\delta^{0}}
\newcommand{\doi}[1]{\href{https://doi.org/#1}{\nolinkurl{#1}}}
\allowdisplaybreaks[1]
\title{Note on near-spanning balanced antidirected trees\\
missing from regular tournaments}
\author{Gregory Gutin\thanks{Royal Holloway University of London, UK.
Email: \href{mailto:g.gutin@rhul.ac.uk}{\texttt{g.gutin@rhul.ac.uk}}.}
\and
Yiming Hao\thanks{Nankai University, China.
Email: \href{mailto:1120230031@mail.nankai.edu.cn}{\texttt{1120230031@mail.nankai.edu.cn}}.}}
\date{}
\hypersetup{pdftitle={Note on near-spanning balanced antidirected trees missing from regular tournaments},
 pdfauthor={Gregory Gutin and Yiming Hao}}
\begin{document}
\maketitle

\begin{abstract}
We construct near-spanning balanced antidirected trees that fail to embed in highly symmetric
tournaments despite the conjectured semidegree condition being satisfied. More precisely, for every
sufficiently large odd integer $n$, we exhibit a regular vertex-transitive tournament on $n$ vertices and a
balanced antidirected caterpillar on $n-1$ vertices with maximum degree at most
$(1+o(1))n/\log_2 n$ that is not contained in the tournament. The host satisfies the strict $k/2$
threshold in both the semidegree and pseudo-semidegree settings, where $k$ is the number of arcs of the
target tree. Consequently, this gives counterexamples to the uniform sublinear-degree interpretations of
Conjectures~6.8 and~7.6 in Stein's survey (2024). The order $n-1$ of the target is best possible under the
strict semidegree hypothesis. We also give a six-vertex balanced antidirected double-star missing from
the seven-vertex Paley tournament, and show that seven is the smallest host order for a
pseudo-semidegree counterexample with this fixed double-star.
\end{abstract}

\section{Introduction}\label{sec:introduction}

An \emph{oriented graph} is a digraph with no loops and at most one arc between any two distinct
vertices. An oriented tree is \emph{antidirected} if every vertex is either a \emph{source} or a
\emph{sink}, and it is \emph{balanced} if its source and sink classes have the same size. We write
$\Delta(T)$ for the \emph{maximum degree} of the underlying tree. For an oriented graph $D$, let
\[
\delta^0(D)=\min_{v\in V(D)}\min\{d_D^+(v),d_D^-(v)\}
\]
be its \emph{minimum semidegree}, and let $\psemi(D)$ denote the minimum of its positive in- and
outdegrees, its \emph{minimum pseudo-semidegree}. We set $\psemi(D)=0$ when $D$ has no arcs.
\emph{Containment} always means an injective arc-preserving \emph{embedding}; copies need not be
induced.

Stein and Z\'arate-Guer\'en~\cite[Section~1, before Theorem~1.2]{SteinZarate}
already express the expectation that every balanced antidirected $k$-arc tree should occur in
an oriented graph of minimum semidegree greater than $k/2$, without a maximum-degree
restriction in that informal formulation.

Stein~\cite[Conjectures~6.8 and~7.6]{SteinSurvey} proposes the threshold $k/2$ for embedding
balanced antidirected trees. In the semidegree version, an $n$-vertex oriented graph $D$ satisfying
\[
\delta^0(D)>\frac{k}{2}
\]
is conjectured to contain every balanced antidirected $k$-arc tree $T$ with $\Delta(T)=o(n)$; the
pseudo-semidegree version replaces $\delta^0(D)$ by $\psemi(D)$. We interpret the condition
$\Delta(T)=o(n)$ uniformly, allowing a sequence of targets $T=T_n$ with
$\Delta(T_n)/n\to0$. Thus, for every sequence of pairs $(D_n,T_n)$ satisfying the corresponding
strict degree condition and this limit, the assertion would require $T_n\subseteq D_n$ for all
sufficiently large $n$.

Equivalently, the \emph{uniform sublinear-degree assertion} considered here is the following:
\begin{quote}
For every function $h:\mathbb N\to\mathbb R_{\ge0}$ with $h(n)=o(n)$, there is an integer
$n_0(h)$ such that, for every $n\ge n_0(h)$, every $n$-vertex oriented graph $D$ contains every
balanced antidirected tree $T$ satisfying
\[
\Delta(T)\le h(n),\qquad \delta^0(D)>\frac{|E(T)|}{2}.
\]
\end{quote}
The corresponding pseudo-semidegree assertion replaces $\delta^0(D)$ by $\psemi(D)$.
This quantification is stronger than an assertion for one prescribed degree bound, such as
$\Delta(T)\le(\log n)^c$ with fixed $c>0$, or the existence of some sufficiently slowly
growing admissible degree bound. Our counterexamples address the uniform assertions,
not all such weaker formulations.

Section~\ref{sec:seven} introduces a seven-vertex obstruction. Taking disjoint unions of the corresponding
host gives arbitrarily large oriented graphs avoiding the same fixed balanced antidirected tree, with
target maximum degree $o(n)$. These examples are not tournaments once more than one component is
used, and the target has fixed size. Theorem~\ref{thm:main} removes both limitations.

\begin{theorem}\label{thm:main}
For every sufficiently large odd integer $n$, there exist a regular vertex-transitive tournament $D_n$
on $n$ vertices and a balanced antidirected caterpillar $T_n$ such that
\begin{equation}\label{eq:main}
|V(T_n)|=n-1,\qquad
\Delta(T_n)\le(1+o(1))\frac{n}{\log_2 n},\qquad T_n\nsubseteq D_n.
\end{equation}
Moreover,
\begin{equation}\label{eq:main-semi}
\delta^0(D_n)=\psemi(D_n)=\frac{n-1}{2}
=\frac{|E(T_n)|}{2}+\frac12.
\end{equation}
The tree $T_n$ may be chosen as a function of $n$ alone.
\end{theorem}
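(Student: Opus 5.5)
The plan is to take $D_n$ to be a circulant tournament on $\Z_n$ with connection set $S$, $|S|=(n-1)/2$, $S\cap(-S)=\emptyset$. Any such circulant is regular and vertex-transitive with every in- and outdegree equal to $(n-1)/2$. This gives \eqref{eq:main-semi} at once, since $|E(T_n)|=n-2$ and so $(n-1)/2=|E(T_n)|/2+\tfrac12$. The whole difficulty is therefore to choose $S$ and the caterpillar $T_n$ so that embedding fails. I would first try the cyclic choice $S=\{1,\dots,(n-1)/2\}$, where $N^+(v)$ and $N^-(v)$ are ``half-circle'' intervals. If the combinatorics there is too rigid or too flexible, I would switch to the quadratic-residue (Paley-type) choice for prime $n$, where pseudo-random domination estimates are available.

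For $T_n$ I would take a caterpillar whose spine alternates sources $a_1,\dots,a_m$ and sinks $b_1,\dots,b_m$, with $m\approx\log_2 n$. The leaves at $a_i$ are sinks and the leaves at $b_j$ are sources. The leaf loads are chosen so that the tree is balanced, has $n-1$ vertices, and every spine degree is at most $(1+o(1))n/\log_2 n$, which roughly means equal loads $\approx n/(2m)$. The structural consequence of an embedding $\phi$ is then clean. Let $X=\phi(\text{sources})$ and $Y=\phi(\text{sinks})$, so that $|X|=|Y|=(n-1)/2$ and exactly one vertex $z$ is missed. Every vertex of $Y\setminus\phi(\{b_j\})$ is an out-neighbour of some $\phi(a_i)\in X$, and every vertex of $X\setminus\phi(\{a_i\})$ is an in-neighbour of some $\phi(b_j)\in Y$. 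Moreover, the numbers of leaves assigned to each spine vertex must be realisable, which is a Hall-type condition on the bipartite ``out-neighbourhood'' graphs.

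The key step is to show that no near-bipartition $V(D_n)\setminus\{z\}=X\cup Y$ admits $m$ vertices of $X$ and $m$ vertices of $Y$ satisfying these covering and Hall conditions. I would use the spine arcs to pin down the relative positions of the $\phi(a_i),\phi(b_j)$, and vertex-transitivity to fix $z=0$. In the cyclic tournament, the union of out-neighbourhoods of the $\phi(a_i)$ restricted to $Y$ is a union of $m$ arcs of a circle. Covering all of $Y$ while simultaneously covering $X$ by in-intervals of the $\phi(b_j)$ then forces $X$ and $Y$ to interleave in at most $O(m)$ blocks. I would choose the leaf loads (for instance with a doubling pattern in how loads are distributed along the spine, which explains the $\log_2 n$) so that the Hall condition for some sub-collection of spine vertices demands more private neighbours than such a block structure can provide. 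The cost of this design is only a factor $1+o(1)$ in $\Delta(T_n)$.

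The main obstacle is this last step: finding a leaf-load pattern for which the impossibility is provable for \emph{every} placement of the spine. The danger is that a tournament as regular as a circulant has enough flexibility to accommodate a balanced caterpillar once $m\sim\log_2 n$. I would first verify small odd $n$ by computer to find the right pattern, and then extract the counting (Hall-deficiency) argument. Since $T_n$ depends only on the loads and $m$, it is a function of $n$ alone, as claimed.
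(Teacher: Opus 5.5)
\textbf{There is a genuine gap.} You handle the semidegree identity, balance, the order $n-1$ and the dependence of $T_n$ on $n$ correctly. However, nothing in the proposal shows $T_n\nsubseteq D_n$: that step, which you yourself flag as the main obstacle, is replaced by a plan to search for a load pattern and a Hall-deficiency argument.

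\textbf{Your default host cannot work.} Take the cyclic tournament with $S=\{1,\dots,M\}$, $M=(n-1)/2$. The evenly loaded caterpillar with $r$ spine sources and $r$ spine sinks embeds in it, as follows.
\begin{itemize}
\item Put the sources at $0,\dots,r-1$ and the sinks at $r,\dots,2r-1$. For $x<y$ in this window we have $y-x\in[1,M]$, so every source dominates every sink and all spine arcs are present.
\item The free out-neighbours of the source at $i-1$ form $[2r,M+i-1]$. The free in-neighbours of the sink at $r+j-1$ form $[M+r+j,2M]$.
\item These are two disjoint nested families, and each union has size $M-r$. Hall's condition holds for loads differing by at most one, and only the vertex $M+r$ is left over.
\end{itemize}
This tournament is not even $2$-e.c., since $\{0,M\}$ out-dominates every other vertex.

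Two further points fail as well. A doubling pattern of loads, read literally, puts a constant fraction of all vertices on one spine vertex, which violates $\Delta(T_n)\le(1+o(1))n/\log_2 n$. Paley tournaments exist only for prime powers $n\equiv 3\pmod 4$, not for every large odd $n$.

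\textbf{The missing idea is existential closure together with the single unused host vertex $w$.} Your covering observation is the right necessary condition: every host vertex outside the spine image, other than $w$, must be an out-neighbour of some $\varphi(a_i)$ or an in-neighbour of some $\varphi(b_j)$.
\begin{itemize}
\item Suppose $D_n$ is $s$-e.c. and the spine has at most $s-1$ vertices. Apply the e.c.\ property to $A=\{\varphi(a_i)\}\cup\{w\}$ and $B=\{\varphi(b_j)\}$.
\item The witness $z$ dominates every $\varphi(a_i)$ and is dominated by every $\varphi(b_j)$.
\item Since $z$ differs from $w$ and from all spine images, it must be the image of a leaf. That is impossible whichever spine vertex the leaf hangs from.
\end{itemize}
No Hall analysis or special load pattern is needed; even loads suffice. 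What you need from the host is a regular vertex-transitive $s_n$-e.c.\ tournament for every large odd $n$, with $s_n=\log_2 n-O(\log\log n)$. The random circulants of Bonato, Gordinowicz and Pra{\l}at supply exactly this.

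\textbf{This also corrects your parameters.} An $s$-e.c.\ tournament on $n$ vertices satisfies $n\ge 2^s+s$, because each of the $2^s$ patterns on a fixed $s$-set needs its own witness. So the spine can have only about $\log_2 n$ vertices: about $\frac12\log_2 n$ sources and as many sinks, not $m\approx\log_2 n$ of each. The resulting loads $\approx (n/2)/(\frac12\log_2 n)$ give exactly $(1+o(1))n/\log_2 n$. For Paley tournaments, the known character-sum bounds give only $s\approx\frac12\log_2 n$, which would cost a factor $2$ in $\Delta$ even at the orders where they exist.
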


The target order in Theorem~\ref{thm:main} is best possible under the strict semidegree hypothesis.
Indeed, every tournament on $n$ vertices has minimum semidegree at most $(n-1)/2$. Hence
$\delta^0(D)>k/2$ forces $k\le n-2$, so a $k$-arc tree has at most $n-1$ vertices. Our
construction attains this bound while keeping $\Delta(T_n)=o(n)$. The host also has the largest
possible minimum semidegree for an $n$-vertex tournament.

The host tournaments come from a result of Bonato, Gordinowicz and Pra{\l}at~\cite[Theorem~2.1]{BGP}.
A tournament $D$ is \emph{$s$-existentially closed} (\emph{$s$-e.c.}) if, whenever
$A,B\subseteq V(D)$ are disjoint and $|A|+|B|\le s$, there is a vertex $z\notin A\cup B$ such
that
\[
za\in E(D)\quad(a\in A),\qquad bz\in E(D)\quad(b\in B).
\]
We use the following consequence of their construction.

\begin{theorem}[Bonato--Gordinowicz--Pra{\l}at {\cite[Theorem~2.1]{BGP}}]\label{thm:ec}
For every sufficiently large odd integer $n$, there exists a regular vertex-transitive tournament on
$n$ vertices that is $s_n$-e.c., where

\[
M=\frac{n-1}{2},\qquad
s_n=\left\lfloor\log_2 M-4\log_2\log M-C\right\rfloor
\]
for a sufficiently large fixed absolute constant $C>0$. In particular,

\[
s_n=\log_2 n-O(\log\log n).
\]
\end{theorem}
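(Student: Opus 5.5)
We derive this from a probabilistic argument for random circulant tournaments; that is the construction behind \cite[Theorem~2.1]{BGP}, and we only need to check that it yields the stated parameters. Let $n$ be odd and $M=(n-1)/2$. For each $i\in\{1,\dots,M\}$ put exactly one of $i,-i$ into a connection set $S\subseteq\Z_n\setminus\{0\}$, each choice independently with probability $1/2$. Let $D$ be the Cayley tournament on $\Z_n$ with $xy\in E(D)$ iff $y-x\in S$. Since $S\cap(-S)=\emptyset$ and $S\cup(-S)=\Z_n\setminus\{0\}$, $D$ is a tournament. Every vertex has out- and indegree $|S|=M$, and translations act transitively, so $D$ is automatically regular and vertex-transitive. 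Hence only the $s_n$-e.c.\ property needs proof. It suffices to show that, with positive probability, every pair of disjoint $A,B$ with $|A|+|B|=s:=s_n$ has a witness $z$. Pairs with $|A|+|B|<s$ can be padded up to size $s$, since any witness for the enlarged pair also works for the original.

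Fix such $A,B$ and write $X=A\cup B$. A vertex $z\notin X$ is a witness iff $a-z\in S$ for all $a\in A$ and $z-b\in S$ for all $b\in B$. This is a condition on the orientations of the $s$ unordered pairs $\{\pm(z-x)\}$, $x\in X$. Call $z$ \emph{good} if these $s$ classes are pairwise distinct. That fails only when $z-x=\pm(z-x')$ for some $x\ne x'$, i.e.\ $2z=x+x'$, which excludes at most $\binom{s}{2}$ values of $z$ because $2$ is invertible mod $n$. For a good $z$, the witness event has probability exactly $2^{-s}$. Next, choose greedily good vertices $z_1,\dots,z_m\notin X$ whose class sets $\{\{\pm(z_j-x)\}:x\in X\}$ are pairwise disjoint. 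Each chosen $z_j$ blocks at most $2s^2$ further values of $z$, since $z-x=\pm(z_j-x')$ determines $z$ up to the sign choice. Hence we can reach $m\ge (n-s-s^2)/(2s^2+1)\ge n/(3s^2)$ for large $n$. The witness events for $z_1,\dots,z_m$ depend on disjoint sets of independent coins, so they are independent. Therefore
\[
\Pr[\text{no witness for }(A,B)]\le (1-2^{-s})^{m}\le \exp\!\bigl(-n/(3s^2 2^{s})\bigr).
\]

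The number of pairs $(A,B)$ is at most $\binom{n}{s}2^s\le (2n)^s$. A union bound therefore succeeds provided
\[
s\log(2n) < \frac{n}{3s^2 2^s}, \quad\text{i.e.}\quad 2^s\, s^3 \log(2n) < n/3 .
\]
With $s\le\log_2 M-4\log_2\log M-C$ we get $2^s\le M/(2^C(\log M)^4)$. Also $s^3\log(2n)=O((\log M)^4)$, since the base-2 versus natural-log factors are absolute constants. So the left side is at most $O(1)\,M/2^C$, which is below $n/3$ once $C$ is a large enough absolute constant. This gives a circulant, regular, vertex-transitive, $s_n$-e.c.\ tournament. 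The final claim $s_n=\log_2 n-O(\log\log n)$ is immediate from $\log_2 M=\log_2 n-1+o(1)$.

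The main obstacle is the dependence between witness events for different $z$ in a circulant: the same difference class is shared by many candidate witnesses. This is exactly why we restrict to good $z$ with pairwise disjoint difference classes. We lose an $s^2$ factor in the number of independent trials, and that loss, together with the $s\log n$ from the union bound, is the origin of the $4\log_2\log M$ correction. The step to check carefully is the greedy count and the exclusion $2z=x+x'$; the latter is where the oddness of $n$ is used.
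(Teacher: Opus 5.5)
Your proof is correct, and its skeleton matches the paper's: the same random circulant tournament with $p=1/2$, regularity and vertex-transitivity from the translations, and padding a prescription of size less than $s_n$ up to size exactly $s_n$. The difference is that the paper never proves the extension property; it simply invokes \cite[Theorem~2.1]{BGP}. You instead prove it from scratch, in three steps. First, you discard the at most $\binom{s}{2}$ candidates with $2z=x+x'$, so each remaining witness event has probability exactly $2^{-s}$. Second, you greedily select about $n/(3s^2)$ candidates with pairwise disjoint difference classes, which makes their witness events independent. Third, you take a union bound over at most $(2n)^s$ prescriptions. The citation keeps the paper short, but it leaves the reader to check that BGP's formulation specializes to this $p=1/2$ statement with these constants. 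Your version is self-contained and shows where the correction term comes from. The disjointness requirement costs a factor $s^2$ and the union bound costs a factor $s\log n$, so you need $2^s s^3\log n\lesssim n$, which is exactly $s\le\log_2 M-4\log_2\log M-O(1)$. Your estimate also gives failure probability tending to $0$ once $C$ is large, matching the paper's ``with probability tending to one''. A minor point: each chosen $z_j$ is itself among its $2s^2$ blocked values, so the greedy count can be taken as $(n-s-\binom{s}{2})/(2s^2)$. Your weaker bound already suffices.
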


To obtain this formulation, use the random circulant construction
of Bonato, Gordinowicz and Pra{\l}at with $p=1/2$.
On $\Z_{2M+1}$, form a \emph{connection set} $J$ by choosing independently and uniformly
one element of each pair $\{j,-j\}$, $1\le j\le M$, and define $D$ by $xy\in E(D)$ exactly when
$x-y\in J$. The resulting \emph{circulant tournament} has indegree and outdegree $M$
at every vertex, and translations act transitively as automorphisms. Thus it is regular
and vertex-transitive. By~\cite[Theorem~2.1]{BGP}, it has the $s_n$-extension property
with probability tending to one. Hence a suitable host exists for every sufficiently large
odd $n$. Fix $C$ and this integer sequence $s_n$ once and for all.

The cited theorem prescribes orientations on sets of total size exactly $s_n$.
For sufficiently large $n$ we have $s_n<n$; a smaller prescription can be enlarged to
size $s_n$ by assigning arbitrary orientations to additional vertices. A witness for the
enlarged prescription also satisfies the original one. This gives our convention
$|A|+|B|\le s_n$.

The proof of Theorem~\ref{thm:main} combines this extension property with a simple leaf obstruction.
We choose an antidirected spine of order about $\log_2 n$ and distribute the remaining vertices as
evenly as possible as leaves on the two sides of the spine. This keeps the maximum degree of order
$n/\log n$. Once the spine has been embedded, existential closure produces a host vertex whose
orientation towards the spine is incompatible with every possible leaf position. Since the target has
$n-1$ vertices, the single unused host vertex can be included among the prescribed adjacencies,
forcing the incompatible vertex to lie inside any putative copy.

Here and below, asymptotics are taken along odd integers $n\to\infty$; $\log$ denotes the
\emph{natural logarithm} and $\log_2$ the \emph{binary logarithm}. A \emph{caterpillar} is a tree
obtained by attaching leaves to a path, called its \emph{spine}.

The leaf obstruction has a classical undirected antecedent. The random-host example
of Koml\'os, S\'ark\"ozy and Szemer\'edi~\cite{KSS}, recalled
in~\cite[Section~6.3, after Theorem~6.10]{SteinSurvey}, uses a tree formed from a small
set of centres with many leaves, joined to form a tree, and a dense random host with no
suitable small dominating set. Replacing each undirected adjacency by arcs in both directions
does not give an oriented graph, so this is not a counterexample under our hypotheses.
Here the extension property yields a balanced antidirected obstruction in a regular
vertex-transitive tournament with only one unused host vertex.

A different earlier obstruction is given by Ara\'ujo, Santos and
Stein~\cite[Section~1, before Question~1.2]{ASS} and attributed there to Rozho\v{n}
and \v{S}\'amal: suitable powers of directed cycles omit certain spanning depth-two
out-trees of maximum degree $O(\sqrt n)$. Those trees are not antidirected, since their
intermediate vertices have both positive indegree and positive outdegree. They therefore
do not refute the balanced-antidirected assertions considered here. We are not aware of
earlier counterexamples to the precise uniform forms of Conjectures~6.8 and~7.6
specified above.

For caterpillars, Stein and Trujillo-Negrete~\cite[Proposition~1.3]{SteinTrujillo}
prove an embedding theorem under the different hypothesis $|E(D)|>(k-1)n$ for an
$n$-vertex digraph and a $k$-arc antidirected caterpillar. This density condition does
not hold for our hosts: for $k=n-2$ and $n>5$,
\[
|E(D_n)|=\frac{n(n-1)}2<n(n-3)=(k-1)n.
\]
Thus that result does not guarantee an embedding in the hosts of Theorem~\ref{thm:main}.

\section{A seven-vertex obstruction}\label{sec:seven}

Let $\Pal_7$ be the \emph{Paley tournament} on $\Z_7$, with
\begin{equation}\label{eq:paley}
xy\in E(\Pal_7)\quad\Longleftrightarrow\quad y-x\in\{1,2,4\}\pmod7.
\end{equation}

This is a classical obstruction for antidirected Hamilton paths; see
Gr\"unbaum~\cite{Grunbaum} and~\cite[Section~3.1]{SteinSurvey}.
Such a path on seven vertices has source and sink classes of sizes four and three and has
six arcs. It is therefore unbalanced, and $\delta^0(\Pal_7)=3=6/2$ gives only equality,
not the strict half-arc threshold. That earlier Hamilton-path obstruction is not itself
a counterexample to the assertions above. Our use of the same host instead has a balanced
six-vertex target and satisfies the strict inequality.

Let $S$ be the \emph{antidirected double-star} on
$\{a,b,u_1,u_2,v_1,v_2\}$ with
\begin{equation}\label{eq:double-star}
E(S)=\{ab,\ au_1,\ au_2,\ v_1b,\ v_2b\}.
\end{equation}
The source and sink classes of $S$ both have size three, so $S$ is balanced and antidirected.
Moreover,
\[
|E(S)|=5,\qquad\Delta(S)=3.
\]

We claim that $S\nsubseteq\Pal_7$. Suppose that $\varphi$ is an embedding, and put
$\alpha=\varphi(a)$ and $\beta=\varphi(b)$. Since $\alpha\beta\in E(\Pal_7)$, the difference
$\beta-\alpha$ belongs to $\{1,2,4\}$. This set is a multiplicative subgroup of $\Z_7^\times$, so
the map
\[
x\longmapsto(\beta-\alpha)^{-1}(x-\alpha)
\]
is an automorphism of $\Pal_7$. Composing $\varphi$ with this automorphism, we may assume that
$\varphi(a)=0$ and $\varphi(b)=1$. But
\[
N^+_{\Pal_7}(0)\setminus\{1\}=\{2,4\},\qquad
N^-_{\Pal_7}(1)\setminus\{0\}=\{4,6\}.
\]
Thus $\{\varphi(u_1),\varphi(u_2)\}=\{2,4\}$ and
$\{\varphi(v_1),\varphi(v_2)\}=\{4,6\}$, contradicting injectivity at the vertex~$4$. Since
$\Pal_7$ is regular,
\begin{equation}\label{eq:paley-semi}
\delta^0(\Pal_7)=\psemi(\Pal_7)=3>\frac{|E(S)|}{2}=\frac52.
\end{equation}
Thus this seven-vertex example already shows that the strict $k/2$ condition alone need not force the
containment of a balanced antidirected tree.

For this fixed target $S$, seven is also the smallest possible order of an oriented graph $D$
satisfying $\psemi(D)>5/2$ and $S\nsubseteq D$. To see this, suppose that $|V(D)|\le6$ and
$\psemi(D)>5/2$. Every positive in- or outdegree is then at least three. No vertex can have both
positive indegree and positive outdegree, since this would give
\[
6\le d_D^+(v)+d_D^-(v)\le|V(D)|-1\le5.
\]
Consequently, every nonisolated vertex is either a source or a sink. There is at least one arc, say
$xy\in E(D)$. Its endpoints satisfy $d_D^+(x)\ge3$ and $d_D^-(y)\ge3$, so there are at least three
sinks and at least three sources. Hence $D$ has exactly six vertices, with three sources and three
sinks, and every source sends an arc to every sink. Choosing any arc as the central arc and the other
four vertices as leaves gives a copy of $S$.

For $t\ge1$, let ${H_t}$ be the disjoint union of $t$ copies of $\Pal_7$. Because $S$ is connected,
any copy of $S$ in ${H_t}$ would have to lie inside a single component, and hence $S\nsubseteq {H_t}$.
At the same time,
\begin{equation}\label{eq:unions}
\delta^0({H_t})=\psemi({H_t})=3>\frac52,\qquad
\frac{\Delta(S)}{|V({H_t})|}=\frac3{7t}\longrightarrow0.
\end{equation}
Consequently, the seven-vertex example already yields arbitrarily large oriented hosts for which the
target has maximum degree $o(|V({H_t})|)$. Two limitations remain: the tree $S$ has the fixed number
$k=5$ of arcs, and ${H_t}$ is not a tournament when $t>1$. The main theorem removes both
restrictions.

In particular, a bounded maximum-degree assumption alone cannot repair a statement
that still allows a fixed target in arbitrarily large oriented hosts: here $k=5$ and
$\Delta(S)=3$ throughout. This observation does not address variants that also require
the target size $k$ to tend to infinity.

\section{Proof of the main theorem}\label{sec:proof}

We first establish a consequence of existential closure for a tree whose vertices outside a small set are
all leaves.

\begin{lemma}[Two-sided leaf obstruction]\label{lem:leaf}
Let $D$ be an $s$-e.c. tournament on $n$ vertices, and let $T$ be an antidirected tree on $n-1$
vertices. Suppose that $U\subseteq V(T)$ is nonempty and that every vertex outside $U$ is a leaf
whose neighbour belongs to $U$. If $|U|+1\le s$, then $T\nsubseteq D$.
\end{lemma}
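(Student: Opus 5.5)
Suppose, for contradiction, that $\varphi\colon V(T)\to V(D)$ is an embedding. Since $|V(T)|=n-1$, exactly one host vertex $w$ lies outside $\varphi(V(T))$. The plan is to apply existential closure to a prescription of total size $|U|+1\le s$. This prescription consists of the image $\varphi(U)$ of the small set, oriented against every possible leaf position, together with the unused vertex $w$. The resulting witness is then forced into the image of $T$, where it must be the image of a leaf. We will show it cannot be.

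Since $T$ is antidirected, each $u\in U$ is either a source or a sink of $T$. A leaf $\ell$ attached to $u$ receives an arc from $u$ if $u$ is a source, and sends an arc to $u$ if $u$ is a sink. Define
\[
A=\{\varphi(u): u\in U \text{ a source of } T\}\cup\{w\},\qquad
B=\{\varphi(u): u\in U \text{ a sink of } T\}.
\]
The sets are disjoint because $\varphi$ is injective and $w\notin\varphi(V(T))$. We have $|A|+|B|=|U|+1\le s$. By $s$-existential closure there is $z\notin A\cup B$ with $za\in E(D)$ for $a\in A$ and $bz\in E(D)$ for $b\in B$. In words, $z$ sends arcs to the images of sources and receives arcs from the images of sinks, which is exactly the reverse of what a leaf neighbour requires. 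Including $w$ in $A$ serves only to guarantee $z\ne w$; placing $w$ in $B$ would work equally well.

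Since $z\ne w$ and $z\notin\varphi(U)$, we get $z=\varphi(x)$ for some $x\in V(T)\setminus U$. By hypothesis $x$ is a leaf whose unique neighbour $u$ lies in $U$. If $u$ is a source of $T$, then $ux\in E(T)$, so $\varphi(u)z\in E(D)$. But $\varphi(u)\in A$ gives $z\varphi(u)\in E(D)$, and a tournament has at most one arc between two vertices, a contradiction. If $u$ is a sink, the argument is symmetric using $B$. Hence no embedding exists.

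The only delicate point is ensuring that $z$ lands inside the image of $T$ and not on the unused vertex. Adding $w$ to the prescription handles this, and it accounts for the $+1$ in the hypothesis $|U|+1\le s$. Nonemptiness of $U$ is used only implicitly: it guarantees that $T$ has the described leaf structure, and in a tree on at least two vertices every leaf has a neighbour. The antidirected hypothesis is what makes the orientation of each leaf arc determined by the type of its centre alone.
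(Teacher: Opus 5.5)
Your proof is correct and matches the paper's argument step for step. The paper likewise applies existential closure to $\varphi(U^+)\cup\{w\}$ and $\varphi(U^-)$, uses the unused vertex $w$ to force the witness $z$ into the image of $T$, and gets the contradiction from the forced orientation of the single leaf arc at $z$.
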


\begin{proof}
Suppose that $\varphi$ embeds $T$ in $D$, and let $w$ be the unique vertex outside
$\varphi(V(T))$. Write $U^+$ and $U^-$ for the sources and sinks of $T$ that belong to $U$,
and put $X=\varphi(U^+)$ and $Y=\varphi(U^-)$. Since $|X|+|Y|+1\le s$, existential closure
applied to $X\cup\{w\}$ and $Y$ gives
\begin{equation}\label{eq:incompatible}
\begin{gathered}
z\notin X\cup Y\cup\{w\},\\
zx\in E(D)\quad(x\in X\cup\{w\}),\qquad
 yz\in E(D)\quad(y\in Y).
\end{gathered}
\end{equation}
Including $w$ among the prescribed vertices ensures that $z$ lies in the copy. Thus
$z=\varphi(v)$ for some $v\in V(T)\setminus U$, and $v$ is a leaf with a neighbour $u\in U$.
If $u\in U^+$, the arc $uv\in E(T)$ requires $\varphi(u)z\in E(D)$, contrary to
$z\varphi(u)\in E(D)$. If $u\in U^-$, the arc $vu\in E(T)$ requires
$z\varphi(u)\in E(D)$, contrary to $\varphi(u)z\in E(D)$. Either case
gives a contradiction.
\end{proof}

\begin{proof}[Proof of Theorem~\ref{thm:main}]
{Let $s_n$ be the integer sequence fixed in Theorem~\ref{thm:ec}; recall that
$s_n=\log_2 n-O(\log\log n)$.} For each
sufficiently large odd $n$, choose an $s_n$-e.c. regular vertex-transitive tournament $D_n$
supplied by that theorem. To apply Lemma~\ref{lem:leaf}, we need a spine with at most $s_n-1$
vertices. We take it as long as possible subject to having equally many sources and sinks, and set
\begin{equation}\label{eq:parameters}
m=\frac{n-1}{2},\qquad r=\left\lfloor\frac{s_n-1}{2}\right\rfloor.
\end{equation}
For large $n$ we have $2\le r\le m$, $2r+1\le s_n$, and
\begin{equation}\label{eq:r-asymptotic}
r=\frac12\log_2 n-O(\log\log n).
\end{equation}
Start with the path on the ordered vertices $a_1,b_1,a_2,b_2,\ldots,a_r,b_r$, with arc set
\begin{equation}\label{eq:spine}
\{a_i b_i:1\le i\le r\}\cup\{a_{i+1}b_i:1\le i<r\}.
\end{equation}
Attach $m-r$ new leaves to the vertices $a_1,\ldots,a_r$, with arcs directed away from the
spine, and another $m-r$ new leaves to $b_1,\ldots,b_r$, with arcs directed towards the spine.
In each group distribute the leaves as evenly as possible: if $m-r=qr+\ell$, where
$0\le\ell<r$, give the first $\ell$ vertices $q+1$ leaves each and every other vertex $q$ leaves.
Call the resulting tree $T_n$. Its underlying graph is a caterpillar, and every vertex is a source or
a sink. The source class consists of the $r$ vertices $a_i$ and the $m-r$ leaves attached to the
$b_i$'s; the sink class consists of the other $m$ vertices. Thus $T_n$ is balanced, with
\begin{equation}\label{eq:tree-order}
|V(T_n)|=2m=n-1,\qquad |E(T_n)|=2m-1=n-2.
\end{equation}
Each spine vertex has at most two neighbours on the spine and at most
$\lceil(m-r)/r\rceil$ attached leaves. All other vertices have degree one, so
\begin{equation}\label{eq:tree-degree}
\Delta(T_n)\le2+\left\lceil\frac{m-r}{r}\right\rceil
=1+\left\lceil\frac mr\right\rceil
=(1+o(1))\frac n{\log_2 n},
\end{equation}
where the last equality follows from $m\sim n/2$ and~\eqref{eq:r-asymptotic}.

Now take $U=\{a_1,b_1,\ldots,a_r,b_r\}$. Every vertex outside $U$ is a leaf adjacent to $U$,
and $|U|+1=2r+1\le s_n$. Lemma~\ref{lem:leaf} therefore gives $T_n\nsubseteq D_n$. Since
$D_n$ is regular,
\begin{equation}\label{eq:final-semi}
\delta^0(D_n)=\psemi(D_n)=m=\frac{|E(T_n)|}{2}+\frac12.
\end{equation}
The attachment rule determines $T_n$ from $n$ and the fixed sequence $s_n$, independently of the
choice of $D_n$. This proves the theorem.
\end{proof}

In particular, these pairs have $k_n=|E(T_n)|=n-2$ and $\Delta(T_n)/n\to0$, but
$T_n\nsubseteq D_n$. They therefore contradict the uniform sublinear-degree interpretations of
Conjectures~6.8 and~7.6 stated in the introduction. Since $k_n/n\to1$, the obstruction remains
after requiring $k_n\ge\eta n$ for any fixed $\eta\in(0,1)$; it is not a consequence of allowing
a fixed target in a growing disjoint union.

Moreover,
\[
\frac{\Delta(T_n)}{|E(T_n)|}
=\frac{\Delta(T_n)}n\,\frac{n}{n-2}\longrightarrow0.
\]
Consequently, replacing the condition $\Delta(T)=o(n)$ by $\Delta(T)=o(k)$ alone does
not avoid these near-spanning counterexamples.

Our construction does not address the corresponding assertions for growing targets of
bounded or polylogarithmic maximum degree in their full generality. In the near-spanning
regular-tournament setting, however, bounded maximum degree has a positive answer by
the spanning-tree theorem of Ara\'ujo, Santos and Stein~\cite[Theorem~1.1]{ASS}.
They prove that, for fixed $d\in\mathbb N$ and $\gamma>0$, every sufficiently large
$n$-vertex oriented graph $D$ with
\[
\delta^0(D)\ge(3/8+\gamma)n
\]
contains every oriented $n$-vertex tree of maximum degree at most $d$.

To deduce the near-spanning case, fix $d$ and let $T$ be any oriented tree on $n-1$
vertices with $\Delta(T)\le d$. Attach one new leaf to a leaf of $T$, with either
orientation, to form an $n$-vertex tree $T^+$ with
$\Delta(T^+)\le\max\{d,2\}$. For a regular tournament $D$ of odd order $n\ge9$,
\[
\delta^0(D)=\frac{n-1}{2}\ge\frac{7n}{16}
=\left(\frac38+\frac1{16}\right)n.
\]
The cited theorem with $\gamma=1/16$ therefore embeds $T^+$, and hence $T$, for all
sufficiently large odd $n$. The added leaf need not preserve balance, because that
theorem permits every orientation of the target. This deduction concerns fixed $d$;
it gives no such conclusion here for a prescribed growing polylogarithmic degree bound.

Nor do we claim that $n/\log n$ is the optimal maximum-degree scale: the sharp
feature established here is the one-vertex order deficit, not a universal degree threshold.

\section*{Statement on AI-assisted research}
GPT 6 Astra was used for literature retrieval, proof verification, and refining the manuscript's presentation.


\begin{thebibliography}{7}
\bibitem{ASS}
P.~Ara\'ujo, G.~Santos, and M.~Stein,
Semidegree threshold for spanning trees in oriented graphs,
\href{https://arxiv.org/abs/2603.10951v2}{arXiv:2603.10951v2}, 2026.


\bibitem{BGP}
A.~Bonato, P.~Gordinowicz, and P.~Pra{\l}at,
Bounds and constructions for $n$-e.c. tournaments,
\emph{Contrib. Discrete Math.} \textbf{5} (2010), no.~2, 52--66.
doi:\doi{10.55016/ojs/cdm.v5i2.62061}.


\bibitem{Grunbaum}
B.~Gr\"unbaum,
Antidirected Hamiltonian paths in tournaments,
\emph{J. Combin. Theory Ser. B} \textbf{11} (1971), 249--257.
doi:\doi{10.1016/0095-8956(71)90035-9}.



\bibitem{KSS}
J.~Koml\'os, G.~N.~S\'ark\"ozy, and E.~Szemer\'edi,
Spanning trees in dense graphs,
\emph{Combin. Probab. Comput.} \textbf{10} (2001), no.~5, 397--416.
doi:\doi{10.1017/S0963548301004849}.


\bibitem{SteinSurvey}
M.~Stein,
Oriented trees and paths in digraphs,
in F.~Fischer and R.~Johnson (eds.), \emph{Surveys in Combinatorics 2024},
London Math. Soc. Lecture Note Ser., vol.~493, Cambridge University Press,
2024, 271--295.
doi:\doi{10.1017/9781009490559.010}.

\bibitem{SteinTrujillo}
M.~Stein and A.~Trujillo-Negrete,
Antidirected trees in dense digraphs,
\emph{SIAM J. Discrete Math.} \textbf{39} (2025), no.~2, 698--727.
doi:\doi{10.1137/24M165497X}.

\bibitem{SteinZarate}
M.~Stein and C.~Z\'arate-Guer\'en,
Antidirected subgraphs of oriented graphs,
\emph{Combin. Probab. Comput.} \textbf{33} (2024), no.~4, 446--466.
doi:\doi{10.1017/S0963548324000038}.
\end{thebibliography}
\end{document}